\setlist[enumerate]{label=\alph*), ref=\alph*)}
\newtheorem{theorem}{Theorem}[section]
\newtheorem{lemma}[theorem]{Lemma}
\newtheorem{proposition}[theorem]{Proposition}
\newtheorem{definition}[theorem]{Definition}
\newtheoremstyle{prikladstyle}   
  {10pt}                       
  {10pt}                       
  {}                            
  {}                           
  {\bfseries}                  
  {.}                          
  { }                          
  {Příklad \thesection.\arabic{theorem}}
\theoremstyle{prikladstyle}
\newcommand{\Z}{\mathbb{Z}}  
\newcommand{\N}{\mathbb{N}}
\newcommand{\keywords}[1]{\noindent\textbf{Keywords:} #1}
\newcommand{\subjclass}[2]{\noindent\textbf{#1 Mathematics Subject Classification:} #2.}
\title{Using continued fractions with prescribed period for universal quadratic forms}
\author[1]{Veronika Menšíková}
\author[2]{Helena Muchová}
\affil[1]{Arcibiskupské gymnázium, Korunní 586/2, Prague, Czechia, verca.mensik@seznam.cz}
\affil[2]{Gymnázium Jana Keplera, Parléřova 118/2, Prague, Czechia, helcamuchova@gmail.com}
\begin{document} 

\maketitle

\begin{abstract} 
We study the congruence classes attained by positive integers $D$ with a prescribed period of the continued fraction of $\sqrt D$. As an application, we refine the available results on large ranks of universal quadratic forms over real quadratic fields by also imposing congruence conditions on their discriminants.

\medskip
\keywords{continued fraction, universal quadratic form, real quadratic number field}
	 
\medskip
\subjclass{2020}{11A55, 11E12, 11R11}

\end{abstract}

\section{Introduction}

In this paper, we consider continued fractions of numbers of the form $\sqrt{D}$ with a given period length. We focus on the possible residue classes of $D$ modulo $n$ and then connect our findings with universal quadratic forms over $\Z[\sqrt D]$.

In 1770, Lagrange proved the Four Square Theorem, which states that every positive integer can be written as a sum of four squares. The expression 
$w^2 + x^2 + y^2 + z^2$
is therefore a universal quadratic form over \(\mathbb{Z}\). Universal quadratic forms can, however, be studied more generally over totally real number
fields. 

Blomer and Kala \cite{blomerkala2015} discovered a connection between universal quadratic forms over rings of integers in number fields (such as $\mathbb{Z}[\sqrt{D}]$) and continued fractions.
A key tool they used is the result of Friesen \cite{friesen1988}, who studied the representation of $\sqrt{D}$ via continued fractions and determined the conditions under which one can prescribe the coefficients of $\sqrt{D}$
so that a corresponding $D$ exists. Halter-Koch \cite{halterkoch1991} extended Friesen's results by showing that one can also impose certain divisibility conditions on the coefficients.

Blomer and Kala used these theorems to construct infinitely many $D$ for which universal quadratic forms must have arbitrarily many variables. This line of research was further developed by several works, including Yatsyna \cite{yatsyna2019lower}, and Kala and Tinková \cite{kalatinkova2023}, who extended the theorem on large numbers of variables to certain cubic fields; Kala then generalized this to fields of arbitrary degree \cite{kala2023number}. More recently, Kala, Yatsyna, and Zmija \cite{kala2023realquadraticfieldsuniversal} significantly strengthened the initial result \cite{blomerkala2015}, proving that almost all quadratic fields require many variables for universal forms. For more information, see the survey article \cite{Kala_2023}.

In this paper, we refine the results of Friesen and Halter-Koch \cite{friesen1988, halterkoch1991} regarding congruence conditions. The main result is Theorem \ref{modulo n}, which states that for a continued fraction of $\sqrt{D}$ with an even period length, $D$ can take any value modulo an odd $n$, along with Theorems \ref{nej1} and \ref{nej2} concerning their applications to universal quadratic forms.

Specifically, in Section \ref{2}, we focus on continued fractions with short periods, for which we explicitly determine the form of $D$ and establish the congruence conditions. In Section \ref{3}, we extend this approach by using Friesen's results for general period lengths, and we prove Theorem \ref{4.5} and Theorem \ref{modulo n}, which generalize the results from the previous section. Finally, in Section \ref{4}, we connect these results with results on universal quadratic forms, leading to Theorems \ref{nej1} and \ref{nej2}.

Interesting open questions are whether
 Theorems \ref{modulo n} and \ref{nej2} can be generalized to even $n$ or to odd $k$. Further, although we have already shown for odd $k$ that $D$ cannot be divisible by any prime $p \equiv 3 \pmod{4}$, it may still be interesting to study the other residues modulo such primes. As a small simplification, in the paper we work only with the continued fraction of $\sqrt D$ and, accordingly, with the order $\Z[\sqrt D]$. Another interesting open problem is to extend our results to the case of continued fractions of $\frac{1+\sqrt D}2$ where $D\equiv 1\pmod 4$, so that one obtains results also for the maximal order $\Z\left[\frac{1+\sqrt D}2\right]$ in this case.

\section*{Acknowledgments}

This article is based on our high school research project. We greatly thank our advisor, Vítězslav Kala, for his help and support with the project.

\section{Continued fractions with short period} \label{2}

Let us recall that for $D \in \Z_{>1}$, which is not a perfect square, the continued fraction expansion
of $\sqrt{D}$ is
$$\sqrt{D}= [a_0, \overline{a_1, \ldots,  a_n, 2a_0}],$$
where the coefficients $a_i$ are positive integers and $(a_1, \ldots , a_n)$ is a palindrome.

Before discussing general periods using Friesen's Theorem, we focused on shorter period lengths. Let us start with the case where the period length is equal to 1.
\begin{proposition}
Let $D$ and $t$ be positive integers such that $\sqrt{D} = [t, \overline{2t}]$. Then $D = t^2 + 1$ and $D$ is not divisible by any prime number $p \equiv 3 \pmod{4}$. 

\begin{proof}
First, we determine the value of $D$ in terms of $t$:
\[
\sqrt{D} = t + \cfrac{1}{2t + \cfrac{1}{2t + \dots}}.
\]
Let $A = \sqrt{D} - t$. By the periodicity of the continued fraction expansion of the square root, after the last term of the period $2t$, the expression $A$ repeats. Thus
\[
A = \frac{1}{2t + A}.
\]
Rearranging gives
\[
A^2 + 2tA = 1.
\]
Substituting $A = \sqrt{D} - t$, we obtain
$D = t^2 + 1.$
If $p$ divides $D$, then
\[
t^2 \equiv -1 \pmod{p}.
\]
However, $-1$ is not a quadratic residue modulo any prime $p \equiv 3 \pmod{4}$, which is a contradiction.
\end{proof}

\end{proposition}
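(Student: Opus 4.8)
The plan is to convert the periodicity of the continued fraction into a single algebraic equation for $\sqrt D$, solve it, and then invoke the classical criterion for when $-1$ is a quadratic residue. The argument is short; the only point that needs a word of care is the justification of the self-similarity equation for the periodic tail, i.e.\ correctly identifying that tail and checking it lies in $(0,1)$ so that the continued-fraction algebra is valid.

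First I would set $A := \sqrt D - t$. Since $\sqrt D = [t, \overline{2t}\,]$ with $t = \lfloor \sqrt D \rfloor$ and $A \in (0,1)$, the tail after the initial term $t$ is purely periodic of period $2t$, so it satisfies $A = \frac{1}{2t + A}$. Clearing denominators gives $A^2 + 2tA - 1 = 0$. Substituting $A = \sqrt D - t$ and expanding, the terms linear in $\sqrt D$ cancel and one is left with $D - t^2 - 1 = 0$, i.e.\ $D = t^2 + 1$. It is worth recording that this $D$ is a legitimate input: since $t^2 < t^2 + 1 < (t+1)^2$ for every $t \ge 1$, the integer $t^2 + 1$ is never a perfect square and $\lfloor \sqrt D \rfloor = t$, consistent with the assumed expansion.

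For the divisibility claim, suppose a prime $p$ divides $D = t^2 + 1$. Then $t^2 \equiv -1 \pmod p$, so $-1$ is a quadratic residue modulo $p$. By Euler's criterion, $\left(\frac{-1}{p}\right) = (-1)^{(p-1)/2}$, which equals $-1$ whenever $p$ is an odd prime with $p \equiv 3 \pmod 4$; and $p = 2$ is not congruent to $3$ modulo $4$ either. Hence no prime $p \equiv 3 \pmod 4$ can divide $D$, which is the desired contradiction. I do not anticipate any real obstacle beyond being careful that the step $A = 1/(2t+A)$ is legitimately the "one full period" self-reference of the tail; the rest is routine algebra and a standard fact from elementary number theory.
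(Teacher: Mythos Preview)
Your proposal is correct and follows essentially the same approach as the paper: set $A=\sqrt{D}-t$, use periodicity to obtain $A=\frac{1}{2t+A}$, solve for $D=t^2+1$, and then invoke the fact that $-1$ is not a quadratic residue modulo a prime $p\equiv 3\pmod 4$. You add a couple of extra sanity checks (that $A\in(0,1)$ and that $t^2+1$ is not a perfect square) which the paper omits, but the argument is the same.
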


Similarly, by explicitly working with periods of length $2$ and $3$, one can straightforwardly show that for a period of length $2$, $D$ can take all residues modulo $p$. 
In contrast, for a period of length $3$, $D$ can attain all residues modulo $p=2$ or $p \equiv 1 \pmod4$, but it cannot be divisible by a prime $p \equiv 3 \pmod{4}$.

Let us further illustrate what is happening by working out the case of period $4$. First, we will establish a general formula for such $D$.

\begin{proposition}\label{period 4}
Let $D, t, u,$ and $v$ be positive integers such that $\sqrt{D} = [t, \overline{u, v, u, 2t}]$. Then 
$D = t^2 + \frac{2vut + 2t + v}{u(uv + 2)}, \quad u \neq 0, \quad uv + 2 \neq 0$ and
$$
D = \left( \frac{u\big(y(uv+2) - v^2\big) - v}{2} \right)^2 + y(uv+2) - v^2 - y
$$
for some $y \in \N$.
\end{proposition}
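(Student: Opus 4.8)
The plan is to compute $D$ in terms of $t,u,v$ by the same "unfolding'' trick used for period~$1$, and then to reinterpret the resulting expression so that the divisibility constraint becomes a free parameter $y$. First I would set $A=\sqrt D-t$ and use periodicity: since the period is $\overline{u,v,u,2t}$, after reading $u$, then $v$, then $u$, then $2t$, the tail $A$ reappears, so
\[
A=\cfrac{1}{u+\cfrac{1}{v+\cfrac{1}{u+\cfrac{1}{2t+A}}}}.
\]
Evaluating this finite continued fraction (a routine computation with the numerator/denominator recurrences, or just clearing denominators level by level) yields a linear equation for $A$ in terms of $t,u,v$; solving it and substituting $A=\sqrt D-t$ gives
\[
D=t^2+\frac{2vut+2t+v}{u(uv+2)},
\]
with the obvious nonvanishing conditions $u\neq 0$, $uv+2\neq 0$ forced by the denominators. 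This is the first displayed formula.

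Next I would extract the integrality condition. Writing $D-t^2=\frac{2t(uv+1)+v}{u(uv+2)}$, one sees that for $D$ to be an integer the quantity $u(uv+2)$ must divide $2t(uv+1)+v$. The key algebraic observation is that $2t(uv+1)+v\equiv -v\cdot\! \big(\text{something}\big)\pmod{uv+2}$ — more precisely $2(uv+1)=2(uv+2)-2\equiv -2$, so modulo $uv+2$ we need $u\mid$-type and $uv+2\mid$-type congruences that, solved for $t$, express $t$ in terms of a free integer. I would introduce $y\in\N$ as the value of $D-t^2+t$ up to the correcting terms, i.e. essentially set $y(uv+2)-v^2$ equal to the natural "cleared'' numerator, and then back out $t=\frac{u(y(uv+2)-v^2)-v}{2}$. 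Substituting this $t$ into $D=t^2+(D-t^2)$ and simplifying the fraction (which, by the choice of $y$, now telescopes) should produce exactly
\[
D=\left(\frac{u\big(y(uv+2)-v^2\big)-v}{2}\right)^2+y(uv+2)-v^2-y.
\]

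The main obstacle I anticipate is not the unfolding of the continued fraction — that is mechanical — but rather guessing the correct substitution that turns the divisibility condition $u(uv+2)\mid 2t(uv+1)+v$ into a clean parametrization by a single $y\in\N$; one has to see which linear combination of $t,u,v$ plays the role of $y$ and verify that the resulting $t$ is a (positive) integer and that all the cross terms cancel when plugged back in. A secondary point to be careful about is the factor of $2$ in the denominator of the expression for $t$: one must check the parity of $u(y(uv+2)-v^2)-v$, i.e. that $y,u,v$ cannot be chosen so as to make this odd — presumably this follows from the palindrome/period structure (for instance $u(uv+2)$ and $v$ have the same parity), and I would isolate that parity check as a small lemma before assembling the final identity.
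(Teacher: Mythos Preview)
Your plan is essentially the paper's proof: set $A=\sqrt D-t$, unfold the period-$4$ continued fraction, and arrive at $D=t^2+\dfrac{2tuv+v+2t}{u(uv+2)}$. The difference is only in how the second formula is obtained. You propose to \emph{guess} a single substitution $t=\tfrac{u(y(uv+2)-v^2)-v}{2}$ and then verify it; the paper instead derives it mechanically in two steps. From $u(uv+2)\mid 2tuv+v+2t$ one first uses $u\mid v+2t$ to write $t=\tfrac{ux-v}{2}$ for some integer $x$; substituting, the remaining fraction has denominator $uv+2$, and the condition $uv+2\mid x+v^2$ gives $x=y(uv+2)-v^2$. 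Composing the two substitutions yields exactly the expression you wrote down, with no guesswork.

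Your parity worry is unnecessary. The integer $x$ is \emph{defined} by $ux=2t+v$, so $ux-v=2t$ is automatically even; there is nothing to check. Likewise $y$ is defined by $x+v^2=y(uv+2)$, so it is an integer by construction. In short, the ``main obstacle'' you anticipate disappears once you split the divisibility by $u(uv+2)$ into divisibility by $u$ and by $uv+2$ separately.
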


\begin{proof}
First, we determine the value of $D$ in terms of $t, u, v$. Let $A = \sqrt{D} - t$. By periodicity, we have
$$
A = \frac{1}{u + \frac{1}{v + \frac{1}{u + \frac{1}{2t + A}}}}.
$$
This expression can be further simplified to:
$$
A = \frac{2tuv + Auv + v + 2t + A}{2tu^2v + Au^2v + uv + 4tu + 2Au + 1},
$$
$$
2A tu^2v + A^2 u^2v + Auv + 4A tu + 2A^2 u + A = 2tuv + Auv + v + 2t + A.
$$
Substituting $A = \sqrt{D} - t$ gives:
$$
(\sqrt{D} - t)^2 (u^2 v + 2u) + (\sqrt{D} - t)(2t u^2 v + 4 t u) = 2 t u v + v + 2t,
$$
$$
D = t^2 + \frac{2 t u v + v + 2 t}{u^2 v + 2 u}.
$$
Since $D, t, u, v$ are integers, we must have $u^2 v + 2 u \mid 2 t u v + v + 2 t$, and also $u \neq 0$ and $uv + 2 \neq 0$.

Therefore $u \mid 2 t u v + v + 2 t$, i.e., there exists $x \in \mathbb{Z}$ such that $t = \frac{ux - v}{2}$. Substituting into the original polynomial gives:
$$
D = \left( \frac{ux - v}{2} \right)^2 + \frac{u^2 v x - u v^2 + v + ux - v}{u(uv + 2)} = \left( \frac{ux - v}{2} \right)^2 + \frac{uvx - v^2 + x}{uv + 2}.
$$

Now, we must also have $uv + 2 \mid uvx + 2x - v^2 - x$, i.e., there exists $y \in \mathbb{Z}$ such that  $x = y(uv+2) - v^2$. Substituting this gives:
$$
D = \left( \frac{u(y(uv+2) - v^2) - v}{2} \right)^2 + y(uv+2) - v^2 - y. \qedhere
$$
\end{proof}

Using the expression that we just established, we can show our conclusion that all residues can be attained for a period of $4$.

\begin{theorem}\label{zbytky pro4}
  Let $n \in \N$ and $m \in \Z$. Then there exists a positive integer $D$ such that the continued fraction of $\sqrt{D}$ has a period length of $4$ and $D \equiv m \pmod{n}$.
\end{theorem}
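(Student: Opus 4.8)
\medskip
\noindent\textbf{Proof proposal.}
The plan is to use the explicit description of $D$ from Proposition~\ref{period 4}. There, for positive integers $u,v$ and a parameter $y\in\N$ one has
$$D=\left(\frac{u\big(y(uv+2)-v^2\big)-v}{2}\right)^{2}+y(uv+2)-v^2-y,\qquad t=\frac{u\big(y(uv+2)-v^2\big)-v}{2},$$
and conversely, once the relevant integrality and size conditions hold, such a $D$ does satisfy $\sqrt D=[t,\overline{u,v,u,2t}]$. I would fix $u$ and $v$ in terms of $n$ and view $D$ as a function of the single variable $y$. The key observation is that for generic $u,v$ this is an honest quadratic polynomial in $y$, and a quadratic polynomial need not be surjective modulo $n$; so the heart of the argument is to choose $u,v$ so that the $y^{2}$-coefficient of $D$ vanishes modulo $n$, turning $D\bmod n$ into an affine-linear function of $y$ that we can then solve for the prescribed residue $m$.

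Concretely, I would take $u=2n$ and $v=2$. Then $uv+2=4n+2$, one computes $t=2n(2n+1)y-4n-1\equiv-1\pmod n$, and substituting into the formula gives
$$D=t^{2}+(4n+1)y-4\equiv 1+y-4\equiv y-3\pmod n .$$
Hence it suffices to pick a positive integer $y$ with $y\equiv m+3\pmod n$; choosing such a $y$ large enough yields $D\equiv m\pmod n$ with $u,v,t,y$ all positive integers, the divisibility conditions of Proposition~\ref{period 4} holding by construction.

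It then remains to confirm that this $D$ genuinely has period $4$. Writing $A=\sqrt D-t$, the relation $A=\cfrac{1}{u+\cfrac{1}{v+\cfrac{1}{u+\cfrac{1}{2t+A}}}}$ holds because $D=t^{2}+\tfrac{2tuv+v+2t}{u^{2}v+2u}$ by construction, so one only needs $0<A<1$, i.e.\ $t^{2}<D<(t+1)^{2}$; for $y$ large this follows from $D-t^{2}=(4n+1)y-4>0$ and $(t+1)^{2}-D=(8n^{2}-1)y-8n+3>0$. Given $0<A<1$, the partial quotients $u,v,u,2t$ are read off unambiguously (each of the intermediate quantities $u+\cdots$, $v+\cdots$, $2t+A$ exceeds $1$ with the stated integer part, since $u,v\ge 1$ and $t\ge 1$), and $D$ is not a perfect square, lying strictly between $t^{2}$ and $(t+1)^{2}$. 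Finally the period equals exactly $4$ rather than a proper divisor of $4$: a period of $1$ or $2$ would force $2t=v=2$, i.e.\ $t=1$, which fails for large $y$.

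I expect the only genuine obstacle to be the one already flagged — realizing that $u,v$ should not be left free but tuned to $n$ so that $D$ becomes linear in $y$ modulo $n$; after that the work is routine bookkeeping (integrality of $t$, positivity of $D$, the two inequalities $t^{2}<D<(t+1)^{2}$, and non-degeneracy of the period), and one could equally well make the square term vanish by any choice with $u$ even and $n\mid uv$.
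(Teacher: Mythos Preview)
Your proposal is correct and follows essentially the same route as the paper: fix $u,v$ so that the expression from Proposition~\ref{period 4} becomes linear in $y$ modulo $n$, then choose $y$ to hit the prescribed residue. The paper takes $u\equiv v\equiv 0\pmod{2n}$ (getting $D\equiv y\pmod n$) whereas you take $u=2n$, $v=2$ (getting $D\equiv y-3\pmod n$), and you are more careful than the paper in verifying that the resulting $D$ genuinely has period exactly~$4$; but the underlying idea is the same.
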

\begin{proof}
    Using Proposition \ref{period 4}, we have:
    $$D=\left(\frac{u\left(y\left(uv+2\right)-v^2\right)-v}{2}\right)^2+y\left(uv+2\right)-v^2-y$$
    For $u\equiv v \equiv 0 \pmod {2n}$, we get:
    $$D \equiv 2y  - y = y  \pmod n.$$

    And since we have the freedom to choose $y$, for any choice of $u$ and $v$, we can select $y$ such that  $y  \equiv b\pmod  n$ for all $b\in\Z$. Hence, this polynomial attains all residues modulo $n$.
\end{proof}

\section{Continued fractions with general period length}\label{3}

Now we are going to work with general period lengths. Recall that 
when $\sqrt {D} = [a_0,\overline {a_1, \ldots, a_{k-1}, a_k}]$, then we define the sequences $p$ and $q$ recursively as: $$
    p_{-1} = 1, \quad 
    p_0 = a_0, \quad 
    p_n = a_np_{n-1} + p_{n-2}$$

    $$q_{-1} = 0, \quad 
    q_0 = 1, \quad 
    q_n = a_nq_{n-1} + q_{n-2}.$$

Let us also summarize Friesen's results, which we will use to express $D$ in terms of the coefficients.

\begin{theorem}[Friesen {\cite{friesen1988}}] \label{friesen}
The equation  
$$\sqrt{D} = [\lfloor\sqrt{D}\rfloor;\overline{ a_1, a_2, \ldots, a_{k-2} = a_2,\; a_{k-1} = a_1,\; a_k = 2\lfloor\sqrt{D}\rfloor}]$$  
has, for any symmetric sequence of positive integers $(a_1, \ldots, a_{k-1})$, infinitely many squarefree solutions $D$, whenever $q_{k-2}$ or $\tfrac{q_{k-2}^2 - (-1)^k}{q_{k-1}}$ is even. 

If both quantities are odd, then there are no solutions $D$ even if the squarefree condition is dropped.  
And $D = \alpha b^2 + \beta b + \gamma$ for any $b\in \Z $ sufficiently large so that $D>0$, where the coefficients $\alpha, \beta, \gamma$ are given below:

\begin{enumerate}
    \item  If
     $q_{k-1} \equiv 1 \pmod 2$  then: $$\alpha = q_{k-1}^2,$$
    $$\beta = 2q_{k-2}-\left(-1\right)^kq_{k-2}\left(q_{k-2}^2-\left(-1\right)^k\right),$$
    $$\gamma = \frac{\left(\frac{q_{k-2}^2}{4}-\left(-1\right)^k \right) \left(q_{k-2}^2-\left(-1\right)^k \right )^2}{q_{k-1}^2}.$$
    \item If $q_{k-1} \equiv 0 \pmod 2$  then: $$\alpha = \frac{q_{k-1}^2}{4},$$
    $$\beta = \frac{2q_{k-2}-\left(-1\right)^kq_{k-2}\left(q_{k-2}^2-\left(-1\right)^k\right)}{2},$$
    $$\gamma = \frac{\left(\frac{q_{k-2}^2}{4}-\left(-1\right)^k \right) \left(q_{k-2}^2-\left(-1\right)^k \right)^2}{q_{k-1}^2}.$$ 
\end{enumerate}

    The discriminant of the polynomial $D = \alpha b^2 + \beta b + \gamma$ can attain only the values $1, -1, 4$ and $-4$. Specifically, for odd $q_{k-1}$, the discriminant equals $4 \cdot (-1)^k$, and for even $q_{k-1}$, the discriminant equals $ (-1)^k$.
\end{theorem}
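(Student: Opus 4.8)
The plan is to compute the discriminant $\Delta=\beta^2-4\alpha\gamma$ of the quadratic $D=\alpha b^2+\beta b+\gamma$ directly from the explicit formulas for $\alpha,\beta,\gamma$, treating the result as a polynomial identity in $q_{k-2}$ and using only the facts that $\big((-1)^k\big)^2=1$ and $q_{k-1}\neq 0$. For brevity write $Q=q_{k-2}$ and $\varepsilon=(-1)^k$, so $\varepsilon^2=1$ and $\varepsilon\in\{1,-1\}$.

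First I would simplify $\beta$. In the odd case, $\beta=2Q-\varepsilon Q(Q^2-\varepsilon)=2Q-\varepsilon Q^3+\varepsilon^2 Q=3Q-\varepsilon Q^3=Q(3-\varepsilon Q^2)$, hence $\beta^2=Q^2(3-\varepsilon Q^2)^2=Q^6-6\varepsilon Q^4+9Q^2$. In the even case $\beta$ is exactly one half of the odd-case value, so $\beta^2$ is one quarter of the same expression; and $\gamma$ is the same in both cases.

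Next I would handle $4\alpha\gamma$. In the odd case $\alpha=q_{k-1}^2$ while $\gamma$ carries $q_{k-1}^2$ in its denominator, so this factor cancels before anything else happens (in particular $\gamma$ need not even be an integer for this step), leaving $4\alpha\gamma=(Q^2-4\varepsilon)(Q^2-\varepsilon)^2$. Expanding, $(Q^2-\varepsilon)^2=Q^4-2\varepsilon Q^2+1$ and then $(Q^2-4\varepsilon)(Q^4-2\varepsilon Q^2+1)=Q^6-6\varepsilon Q^4+9Q^2-4\varepsilon$. Subtracting from $\beta^2$, every power of $Q$ cancels and $\Delta=\beta^2-4\alpha\gamma=4\varepsilon=4(-1)^k$. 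In the even case $\alpha=q_{k-1}^2/4$, so $4\alpha\gamma$ is one quarter of $(Q^2-4\varepsilon)(Q^2-\varepsilon)^2$; since $\beta^2$ also picked up the factor $1/4$, the identical cancellation gives $\Delta=\varepsilon=(-1)^k$. Finally, since $(-1)^k\in\{1,-1\}$, the only values $\Delta$ can take are $1,-1,4,-4$, which is exactly the asserted statement, with the two cases $q_{k-1}$ odd and $q_{k-1}$ even distinguished as claimed.

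I do not expect a genuine obstacle: the whole argument is one polynomial identity, and the only thing to be careful about is the bookkeeping — keeping the sign $\varepsilon$ and the extra factor $1/4$ in the even case straight, and noting that the $q_{k-1}^2$ in the denominator of $\gamma$ simply cancels against $\alpha$, so no divisibility or integrality input from Friesen's theorem is needed here. As an optional sanity check one can verify consistency with the integrality constraints on $\gamma$ built into the statement, but this plays no role in the proof itself.
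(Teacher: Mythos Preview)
The paper does not supply a proof of Theorem~\ref{friesen}; it is quoted from Friesen~\cite{friesen1988} and used as a black box, so there is no in-paper argument to compare against. Your proposal does not attempt the substantive parts of the theorem (existence of infinitely many squarefree $D$, the non-existence in the doubly-odd case, and the derivation of the formulas for $\alpha,\beta,\gamma$) but only verifies the final discriminant assertion, taking the formulas for $\alpha,\beta,\gamma$ as given.

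For that limited goal your computation is correct. With $Q=q_{k-2}$ and $\varepsilon=(-1)^k$ one has $\beta=Q(3-\varepsilon Q^2)$ (respectively half of this), and $4\alpha\gamma=(Q^2-4\varepsilon)(Q^2-\varepsilon)^2$ (respectively one quarter), and the cancellation $\beta^2-4\alpha\gamma=4\varepsilon$ (respectively $\varepsilon$) is exactly as you wrote. So the discriminant claim follows by a routine polynomial identity, and this is presumably why the paper does not bother to argue it separately. Just be clear in your write-up that you are checking only this last line of the statement and are relying on Friesen for the rest.
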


We now use these results to prove the statements about the congruence conditions on $D$ for continued fractions of $\sqrt{D}$ with a given period length. We consider two cases: even and odd period lengths. We begin with the case of an odd period length.

\begin{theorem}\label{4.5}
    Let $\sqrt{D}=[a_0, \overline{a_1,\ldots,a_k}]$, where $k\equiv1\pmod{2}$. Then $D\not\equiv0\pmod{p}$ for a prime $p\equiv 3\pmod{4}$.
\end{theorem}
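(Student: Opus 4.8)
The plan is to exploit the classical fact that an odd period in the continued fraction of $\sqrt D$ forces the negative Pell equation $x^2 - Dy^2 = -1$ to be solvable, and then to combine this with the elementary fact that $-1$ is a quadratic non-residue modulo every prime $p \equiv 3 \pmod 4$.

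Concretely, I would first recall the standard identity relating the penultimate convergent of the period to $D$: with the sequences $p_n, q_n$ defined above, the convergent $p_{k-1}/q_{k-1}$ satisfies $p_{k-1}^2 - D q_{k-1}^2 = (-1)^k$. Since $k$ is odd, this reads $p_{k-1}^2 + 1 = D q_{k-1}^2$. If one prefers to stay strictly within the material of the excerpt, the same conclusion can be extracted from Theorem \ref{friesen}: writing $D = \alpha b^2 + \beta b + \gamma$ and multiplying through by $4\alpha$ gives $4\alpha D = (2\alpha b + \beta)^2 - \Delta$, where $\Delta$ is the discriminant. For odd $k$ we have $\Delta = -4$ when $q_{k-1}$ is odd (with $\alpha = q_{k-1}^2$) and $\Delta = -1$ when $q_{k-1}$ is even (with $\alpha = q_{k-1}^2/4$); in both cases this rearranges to $q_{k-1}^2\, D = N^2 + 1$ for a suitable integer $N$, which is the same shape as the Pell identity.

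Then I would argue by contradiction: suppose a prime $p \equiv 3 \pmod 4$ divides $D$. From $p_{k-1}^2 + 1 = D q_{k-1}^2$ (equivalently $N^2 + 1 = q_{k-1}^2 D$) we get $p \mid p_{k-1}^2 + 1$, hence $p_{k-1}^2 \equiv -1 \pmod p$, so $-1$ is a quadratic residue modulo $p$. But by Euler's criterion $(-1)^{(p-1)/2} \equiv -1 \pmod p$ when $p \equiv 3 \pmod 4$, so $-1$ is a non-residue, a contradiction. Therefore no prime $p \equiv 3 \pmod 4$ divides $D$.

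Since the argument is short, there is no substantial obstacle; the only points requiring a little care are getting the sign right in $p_{k-1}^2 - D q_{k-1}^2 = (-1)^k$ (equivalently, checking that both parity cases of Theorem \ref{friesen} genuinely collapse to $q_{k-1}^2 D = N^2 + 1$, including that the relevant $N$ is an integer in the even-$q_{k-1}$ case), and noting that the divisibility $p \mid p_{k-1}^2 + 1$ requires no coprimality hypothesis between $p$ and $q_{k-1}$.
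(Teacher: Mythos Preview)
Your argument is correct. Your primary route via the Pell identity $p_{k-1}^2 - Dq_{k-1}^2 = (-1)^k$ is the classical and more self-contained way to reach the conclusion: it needs nothing beyond standard continued-fraction theory and Euler's criterion, and as you note it sidesteps any coprimality issues. The paper instead argues entirely through Friesen's parametrization $D=\alpha b^2+\beta b+\gamma$: for odd $k$ the discriminant $\beta^2-4\alpha\gamma$ equals $-4$ or $-1$, and if $p\mid D$ then this discriminant would have to be a square modulo $p$, contradicting $p\equiv 3\pmod 4$. Your ``alternative'' paragraph (completing the square to get $4\alpha D=(2\alpha b+\beta)^2-\Delta$) is exactly this argument in slightly different dress, and in fact your phrasing is marginally cleaner since it makes the implication $p\mid D\Rightarrow\Delta$ is a square mod $p$ transparent even when $p\mid\alpha$. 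The trade-off is that the paper's version keeps everything inside the Friesen framework used in the rest of Section~\ref{3}, whereas your Pell-equation version is shorter and independent of Theorem~\ref{friesen}.
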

\begin{proof}
    For the sake of contradiction, assume that there exists such $D\equiv 0 \pmod p$. Then we see that we can express $D$ as $D = \alpha b^2 + \beta b + \gamma\equiv 0\pmod{p}$. According to Theorem \ref{friesen}, the discriminant of this polynomial is equal to $-4$ or $-1$. Then, for the equation to have solutions, it follows that its discriminant has to be congruent to a square modulo $p$. Thus:
    $-4 \equiv c^2 \pmod p$ or $-1 \equiv c^2 \pmod p$.
    Since 4 and 1 are quadratic residues for every $p\equiv3\pmod{4}$, then in both cases $-1$ has to be a quadratic residue as well, which it is not. And we have the desired contradiction.
\end{proof}

Note that by this proof we have generalized theorems in the previous section, where we established the same property for $k=1$ (and $k=3$). 

We now continue with the case of an even period length. First, let us prove several useful lemmas that will be needed in the main proof. In the statements, $\alpha$ and $\beta$ are as in Theorem \ref{friesen}.

\begin{lemma}\label{alfa 0}
    Let $\sqrt{D}=[a_0, \overline{a_1,\ldots,a_k}]$, $k$ be an even number, $p$ an odd prime, and $a\in \N$. Let $l = \frac{k}{2}-1$. Then:
    \begin{enumerate}[label=\alph*)]
        \item if $l$ is odd, by choosing $a_l \equiv a_{l-2} \equiv a_{l-4} \dots \equiv a_1 \equiv 0 \pmod{p^a}$
        \item if $l$ is even, by choosing  $a_l \equiv a_{l-2} \equiv a_{l-4} \dots \equiv a_4 \equiv 0 \pmod{p^a}$, $a_1 \equiv x \not \equiv 0 \pmod{p^a}$, $a_2 \equiv -\frac{1}{x} \pmod{p^a}$
    \end{enumerate}
    we obtain $\alpha \equiv q_{k-1} \equiv 0 \pmod {p^a}$.
\end{lemma}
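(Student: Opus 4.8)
The plan is to analyze the recursion $q_n = a_n q_{n-1} + q_{n-2}$ modulo $p^a$, exploiting that the period $(a_1,\dots,a_{k-1})$ is a palindrome with $a_{k-1}=a_1$, $a_{k-2}=a_2$, etc., and that the indices we are allowed to set to $0$ (or $x,-1/x$) are spaced two apart. The key observation is that if $a_j\equiv 0\pmod{p^a}$, then $q_j\equiv q_{j-2}\pmod{p^a}$, and more usefully $q_j q_{j-1}$ "collapses" in a way that lets us track the pair $(q_{j-1},q_j)$ modulo $p^a$ through the whole continued fraction. So the first step is to record the elementary reduction: modulo $p^a$, setting $a_j\equiv 0$ turns the matrix $\begin{pmatrix} a_j & 1 \\ 1 & 0\end{pmatrix}$ into $\begin{pmatrix} 0 & 1 \\ 1 & 0\end{pmatrix}$, which just swaps coordinates; and setting the pair $(a_1,a_2)\equiv(x,-1/x)$ makes the product $\begin{pmatrix} x & 1 \\ 1 & 0\end{pmatrix}\begin{pmatrix} -1/x & 1 \\ 1 & 0\end{pmatrix}\equiv\begin{pmatrix} 0 & x \\ -1/x & 1\end{pmatrix}\pmod{p^a}$, whose relevant entry kills one coordinate.

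Next I would set up the matrix product $M=\begin{pmatrix} a_{k-1} & 1\\ 1 & 0\end{pmatrix}\cdots\begin{pmatrix} a_1 & 1\\ 1 & 0\end{pmatrix}$, whose entries are $q_{k-1},q_{k-2}$ (and $p_{k-1},p_{k-2}$, but we only need the $q$'s; here one uses the standard fact that the $q$'s arise as the product of the "tails" of these matrices starting from $a_1$). The claim $q_{k-1}\equiv 0\pmod{p^a}$ is then the statement that a certain entry of $M$ vanishes mod $p^a$. Splitting the palindrome at its center and using that $l=\tfrac k2-1$ indexes the coefficient just before the middle, the two cases of the lemma correspond exactly to whether the chain of "every-other-index-zero" conditions reaches down to $a_1$ (the odd-$l$ case, where $a_1$ itself can be set to $0$) or stops at $a_4$ (the even-$l$ case, where $a_1$ cannot be zeroed out because $a_1$ and $a_l$ would then have the same parity of index-gap... so instead we use the $(x,-1/x)$ trick on $a_1,a_2$ to achieve the same collapse). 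In each case, propagating the swaps/collapses from both ends of the palindrome inward, I expect the product $M$ to reduce modulo $p^a$ to something with a zero in the lower-left entry, giving $q_{k-1}\equiv 0$; and then $\alpha\equiv 0\pmod{p^a}$ follows immediately since $\alpha$ is $q_{k-1}^2$ or $q_{k-1}^2/4$ by Theorem \ref{friesen} (noting $q_{k-1}\equiv 0\pmod{p^a}$ with $p$ odd forces $q_{k-1}^2/4\equiv0$ as well, as $4$ is invertible mod $p^a$).

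The main obstacle I anticipate is bookkeeping: making the parity condition on $l$ line up correctly with which coefficients get zeroed, and verifying that the prescribed residues are actually simultaneously attainable by a genuine palindromic sequence of \emph{positive} integers (so that Friesen's theorem applies and produces an actual $D$). The positivity is handled by the Chinese-remainder/lifting freedom — each $a_i$ need only be fixed modulo $p^a$, so we may take large positive representatives — but one must check the $(x,-1/x)$ condition in case b) is consistent, i.e. that there is an $x\not\equiv 0\pmod{p^a}$ with $-1/x$ well-defined, which is automatic. A secondary subtlety is the case $k=2$, where $l=0$ and the list of conditions is empty; there $q_{k-1}=q_1=a_1$ and one simply picks $a_1\equiv 0\pmod{p^a}$ directly, so the statement holds vacuously-plus-trivially and should be mentioned. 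Once the matrix reduction is carried out carefully in both parities of $l$, the conclusion $\alpha\equiv q_{k-1}\equiv 0\pmod{p^a}$ drops out.
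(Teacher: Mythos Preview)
Your strategy is sound and would eventually succeed, but you are working harder than necessary. Rather than collapsing the full product $M=M_{k-1}\cdots M_1$ ``from both ends of the palindrome inward,'' the paper invokes the single palindrome identity
\[
q_{k-1}=q_l\bigl(q_{l+1}+q_{l-1}\bigr),
\]
valid because $(a_1,\dots,a_{k-1})$ is symmetric with centre $a_{l+1}$; in your matrix language this is nothing but the $(1,1)$ entry of $A\,M_{l+1}\,A^{T}$ with $A=\prod_{i=1}^{l}M_i$. Once this factorisation is in hand, the whole task reduces to showing $q_l\equiv 0\pmod{p^a}$, and that follows immediately from the recurrence observation you already isolated: if $a_j\equiv 0$ then $q_j\equiv q_{j-2}$, so in case a) one gets $q_l\equiv q_{l-2}\equiv\cdots\equiv q_1=a_1\equiv 0$, and in case b) one gets $q_l\equiv\cdots\equiv q_2=a_1a_2+1\equiv x\cdot(-1/x)+1\equiv 0$. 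Your matrix-collapse argument is essentially a disguised proof of this identity plus the same induction, so both routes are correct; the paper's version simply short-circuits all the bookkeeping you anticipate as ``the main obstacle.'' Your side remarks are on target: the existence of an actual $D$ via Friesen's parity condition is handled exactly as you say (the constraints are only modulo $p^a$ with $p$ odd, so one adjusts the $a_i$ modulo $2$ independently via CRT), and $\alpha\equiv 0$ follows from $q_{k-1}\equiv 0$ since $4$ is a unit mod $p^a$. One small slip: with your ordering of $M$, the entry carrying $q_{k-1}$ is the upper-left, not the lower-left.
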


 \begin{proof}

We start with the well-known property:
 $$q_{k-1} = q_l q_{l+1} + q_{l-1} q_l = q_l\left(q_{l+1} + q_{l-1}\right).$$
 
\begin{enumerate}[label=\alph*)]
    \item By choosing $a_l \equiv a_{l-2} \equiv a_{l-4} \equiv \ldots \equiv a_1 \equiv 0 \pmod {p^a}$ for odd $l$, so {$q_1 = a_1 \equiv 0 \pmod {p^a}$} and by induction we obtain that if $q_{b-2} \equiv 0 \pmod p$, that also $q_{b} = a_{b} q_{b-1} + q_{b-2}  \equiv 0 \pmod p$ for odd $b \leq l$. 
    \item For even $l$ by choosing $a_l \equiv a_{l-2} \equiv a_{l-4} \equiv \ldots \equiv a_4 \equiv 0 \pmod {p^a}$, $a_1 \equiv x \not \equiv 0 \pmod {p^a}$, $a_2 \equiv -\frac{1}{x} \pmod {p^a}$, we obtain  $q_2 = a_2q_1 + q_0 = a_1a_2 + 1 \equiv 0 \pmod {p^a}$ and by induction we again obtain that if $q_{b-2} \equiv 0 \pmod p$, then also $q_{b} = a_{b} q_{b-1} + q_{b-2}  \equiv 0 \pmod p$ for even $b \leq l$.
\end{enumerate}

At the same time, from Theorem \ref{friesen}, any set $a_1, a_2, \ldots a_l, a_{l+1}$ has a solution if at least one of $q_{k-2}$ and $\frac{q_{k-2}^2-\left(-1\right)^k}{q_{k-1}}$ is even. We only got conditions modulo $p^a$ on $a_1, a_2, \ldots a_l, a_{l+1}$. It suffices to choose any suitable solution modulo 2. From the Chinese Remainder Theorem, we obtain conditions on $a_1, a_2, \ldots a_l, a_{l+1}$ modulo $2p^a$, and from Theorem \ref{friesen}, the given set $a_1, a_2, \ldots a_l, a_{l+1}$ has a solution.
\end{proof}

\begin{lemma} \label{beta}
    Let $\sqrt{D}=[a_0, \overline{a_1,\ldots,a_k}]$, where $k$ is an even number. Let $p$ be an odd prime, $a\in \N$, and $l = \frac{k}{2}-1$. Then:
    \begin{enumerate}[label=\alph*)]
        \item if $l$ is odd, by choosing $a_l \equiv a_{l-2} \equiv a_{l-4} \ldots \equiv a_1 \equiv 0 \pmod {p^a}$
        \item if $l$ is even, by choosing  $a_l \equiv a_{l-2} \equiv a_{l-4} \ldots \equiv a_4 \equiv 0 \pmod {p^a}$, $a_1 \equiv x \not \equiv 0 \pmod {p^a}$, $a_2 \equiv -\frac{1}{x} \pmod {p^a}$
    \end{enumerate}
    we obtain that $\beta$ and $p^a$ are coprime.
\end{lemma}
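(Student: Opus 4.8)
The plan is to reduce the claim to a congruence computation for $q_{k-2}$ modulo $p^a$, running in parallel to the argument in the proof of Lemma~\ref{alfa 0}. Since $k$ is even, $(-1)^k=1$, and the expression for $\beta$ in Theorem~\ref{friesen} collapses to $\beta=q_{k-2}\bigl(3-q_{k-2}^2\bigr)$ when $q_{k-1}$ is odd and to $\beta=\tfrac12 q_{k-2}\bigl(3-q_{k-2}^2\bigr)$ when $q_{k-1}$ is even. Because $p$ is odd, $2$ is a unit modulo $p^a$, so in both cases it suffices to show that $2\beta\equiv q_{k-2}\bigl(3-q_{k-2}^2\bigr)$ is a unit modulo $p^a$; equivalently, that $p\nmid q_{k-2}$ and $q_{k-2}^2\not\equiv 3\pmod p$.

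Next I would evaluate $q_{k-2}=q_{2l}$ modulo $p^a$ under the prescribed choices, using $q_n=a_nq_{n-1}+q_{n-2}$ together with the symmetry $a_i=a_{2l+2-i}$ of the period. In case~(a), the hypothesis together with this symmetry forces $a_i\equiv 0\pmod{p^a}$ for \emph{every} odd index $i$ in the range $1\le i\le 2l+1$, so the recursion gives at once $q_n\equiv 1\pmod{p^a}$ for even $n$ and $q_n\equiv 0\pmod{p^a}$ for odd $n$; in particular $q_{k-2}=q_{2l}\equiv 1$ and $q_{k-2}\bigl(3-q_{k-2}^2\bigr)\equiv 2\pmod{p^a}$. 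In case~(b) one computes $q_1\equiv x$, $q_2\equiv a_1a_2+1\equiv 0$, and then — since $a_i\equiv 0\pmod{p^a}$ for all even $i$ with $4\le i\le 2l-2$, while every unconstrained ($=$ odd-indexed) $a_i$ in the recursion multiplies a continuant that is $\equiv 0\pmod{p^a}$ — an easy induction gives $q_n\equiv x\pmod{p^a}$ for odd $n\le 2l-1$ and $q_n\equiv 0\pmod{p^a}$ for even $n\le 2l-2$; hence $q_{2l}=a_{2l}q_{2l-1}+q_{2l-2}\equiv(-\tfrac1x)x=-1\pmod{p^a}$ and $q_{k-2}\bigl(3-q_{k-2}^2\bigr)\equiv -2\pmod{p^a}$. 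In both cases the same computation yields $q_{k-1}=q_{2l+1}\equiv 0\pmod{p^a}$, which re-derives Lemma~\ref{alfa 0}.

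Combining the two steps, $2\beta\equiv\pm 2\pmod{p^a}$ is a unit modulo $p^a$; since $\beta\in\Z$ and $2$ is invertible modulo $p^a$, also $\beta$ is a unit, i.e. $\gcd(\beta,p^a)=1$. It remains only to note — exactly as in the proof of Lemma~\ref{alfa 0} — that the congruence conditions imposed on $a_1,\dots,a_{l+1}$ are consistent with the hypotheses of Theorem~\ref{friesen}: one couples them via the Chinese Remainder Theorem with a choice modulo $2$ that makes $q_{k-2}$ or $\tfrac{q_{k-2}^2-1}{q_{k-1}}$ even, so that a (squarefree) $D$ with these partial quotients genuinely exists.

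I do not anticipate a real obstacle; the point demanding care is the index bookkeeping — pinning down exactly which $a_i$ are forced to $0$ (respectively to $x$ and $-\tfrac1x$) by the hypothesis and the palindrome, and checking in the induction that each free $a_i$ is always multiplied by a continuant $q_j\equiv 0\pmod{p^a}$, so the clean parity pattern for the $q_n$ survives. One should also handle the division by $2$ (and the division by $4$ appearing in $\alpha$) cleanly by arguing with $2\beta$ rather than $\beta$ until the final line.
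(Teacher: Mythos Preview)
Your proposal is correct and follows essentially the same route as the paper: reduce to showing $p\nmid q_{k-2}(3-q_{k-2}^2)$, then compute $q_{k-2}\equiv \pm 1$ via the recursion, using the palindrome symmetry to propagate the congruence constraints on the $a_i$ to the right half of the period. The only cosmetic differences are that the paper works modulo $p$ rather than $p^a$ (which suffices since $\gcd(\beta,p^a)=1\Leftrightarrow p\nmid\beta$), and that the paper inserts a separate step invoking $\gcd(q_{k-2},q_{k-1})=1$ together with $p\mid q_{k-1}$ to conclude $p\nmid q_{k-2}$ --- a step your direct computation of $q_{k-2}\equiv\pm1$ renders unnecessary.
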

\begin{proof}
We know that: $$\beta = 2q_{k-2}-q_{k-2}\left(q_{k-2}^2 -1\right) = q_{k-2}\left(3 - q_{k-2}^2\right),$$
or
$$\beta = \frac{2q_{k-2}-q_{k-2}\left(q_{k-2}^2 -1\right)}{2} = \frac{q_{k-2}\left(3 - q_{k-2}^2\right)}{2}.$$
Simultaneously $q_{k-1} \equiv 0 \pmod {p^a}$. 
As $q_{k-2}$ and $q_{k-1}$ are coprime, $q_{k-2}$ and $p^a$ are also coprime. It suffices to prove that also $3 - q_{k-2}^2$ and $p^a$ are coprime, so that $p \not \mid 3 - q_{k-2}^2$.
\begin{enumerate}[label=\alph*)]
    \item  For odd $l$ we chose $a_{k-1} \equiv \ldots a_l \equiv a_{l-2} \equiv a_{l-4} \ldots \equiv a_1 \equiv 0 \pmod {p}$. So for any $b$ it holds that $q_b = a_b q_{b-1} + q_{b-2}\equiv q_{b-2} \pmod p$. By induction then $q_{k-2} \equiv q_2 = a_2a_1 + 1 \equiv 1 \pmod p$. 
\item For even $l$ we chose $a_{k-4}, \ldots , a_l \equiv a_{l-2} \equiv a_{l-4} \ldots \equiv a_4 \equiv 0 \pmod {p}$ a $a_{k-2} = a_2 \equiv -\frac{1}{a_1} \pmod {p}$. Then for any $b< k-2$ it holds that $q_b = a_bq_{b-1} + q_{b-2}\equiv q_{b-2} \pmod p$. By induction then $q_b \equiv q_1 = a_1 \pmod p$. Simultaneously for even $2<b<k-2$ it holds that $q_b = a_bq_{b-1} + q_{b-2}\equiv q_{b-2} \pmod p$. By induction then $q_b \equiv q_2 = 0 \pmod p$. Further,  $q_{k-2} = a_2q_{k-3} + q_{k-4} \equiv -\frac{1}{a_1}a_1 + q_{k-4} \equiv -1 \pmod p $. 
\end{enumerate}
In both cases then $q_{k-2}^2 \equiv 1 \pmod p$. For $p \not = 2$ that means $q_{k-2}^2 \not \equiv 3$ and $p \nmid 3 - q_{k-2}^2$. And $p^a$ and $\beta$ are then coprime. 
\end{proof}

We are now ready to present the proof for the case of an even period length.
\begin{theorem} \label{modulo n}
   Let $m,n,k$ be positive integers, where $n$ is odd and $k$ is even. Then there exists a positive integer $D$ such that the continued fraction $\sqrt{D}$ is of the form $\sqrt{D}=[a_0, \overline{a_1,\ldots,a_k}]$ and $D\equiv m\pmod n$.
    
\end{theorem}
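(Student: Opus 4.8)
The plan is to combine Friesen's parametrization $D = \alpha b^2 + \beta b + \gamma$ with Lemmas \ref{alfa 0} and \ref{beta} and two applications of the Chinese Remainder Theorem: one to pin down the partial quotients $a_1, \dots, a_{l+1}$ (where $l = \tfrac{k}{2} - 1$) so that $\alpha \equiv 0$ and $\beta$ is a unit modulo $n$, and a second one to choose $b$ so that the resulting expression, which is then linear in $b$ modulo $n$, hits $m$.

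Concretely, first I would factor $n = \prod_i p_i^{e_i}$ into powers of distinct odd primes. For each $i$, Lemma \ref{alfa 0} and Lemma \ref{beta}, applied with $p = p_i$ and $a = e_i$ (note that $k$ is even, so $(-1)^k = 1$, matching the form of $\beta$ used in those lemmas), supply congruence conditions on $a_1, \dots, a_{l+1}$ modulo $p_i^{e_i}$ under which $\alpha \equiv q_{k-1} \equiv 0 \pmod{p_i^{e_i}}$ and $\gcd(\beta, p_i^{e_i}) = 1$. Since the $p_i^{e_i}$ are pairwise coprime, the Chinese Remainder Theorem merges these into a single system of congruences on $a_1, \dots, a_{l+1}$ modulo $n$; merging in addition a suitable residue modulo $2$ so that Friesen's solvability condition (one of $q_{k-2}$, $\tfrac{q_{k-2}^2 - (-1)^k}{q_{k-1}}$ even) holds, exactly as in the proofs of the lemmas, and then choosing positive integer representatives, yields an admissible symmetric period $(a_1, \dots, a_{k-1})$. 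Friesen's Theorem \ref{friesen} then guarantees infinitely many $D$ with $\sqrt{D} = [a_0, \overline{a_1, \dots, a_k}]$, all of the form $D = \alpha b^2 + \beta b + \gamma$ for $b$ sufficiently large, where now $\alpha \equiv 0$ and $\beta$ is invertible modulo $n$.

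With $\alpha, \beta, \gamma$ fixed, modulo each $p_i^{e_i}$ we have $D \equiv \beta b + \gamma \pmod{p_i^{e_i}}$, and since $\beta$ is a unit this is a bijection in the variable $b \bmod p_i^{e_i}$; hence there is a unique residue $b \equiv b_i \pmod{p_i^{e_i}}$ for which $D \equiv m \pmod{p_i^{e_i}}$. A second application of the Chinese Remainder Theorem produces a residue class for $b$ modulo $n$, and any $b$ in that class which is large enough to ensure $D > 0$ (and large enough for Friesen's Theorem to apply) gives $D \equiv m \pmod{p_i^{e_i}}$ for every $i$, that is, $D \equiv m \pmod{n}$, as required.

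The two CRT steps are routine bookkeeping; the substantive work sits in Lemmas \ref{alfa 0} and \ref{beta}, which is where the real obstacle lies, namely arranging the partial quotients so that $\alpha$ vanishes and $\beta$ becomes a unit modulo a prescribed prime power while keeping the sequence symmetric and Friesen-admissible. At the level of the theorem the only points needing care are that the conditions imposed on $a_1, \dots, a_{l+1}$ for the different primes (and modulo $2$) are genuinely compatible, which holds because the moduli are pairwise coprime, and that the second CRT step ($b$ in a fixed class, $b$ large) does not clash with the first, which it does not since the choice of $b$ is made only after $\alpha, \beta, \gamma$ are frozen.
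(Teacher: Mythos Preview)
Your proposal is correct and follows essentially the same route as the paper: use Lemmas \ref{alfa 0} and \ref{beta} together with CRT across the prime-power factors of $n$ (plus the mod~$2$ tweak for Friesen's solvability) to make $\alpha\equiv 0$ and $\beta$ a unit modulo $n$, then vary $b$ so that the now-linear expression $\beta b+\gamma$ hits $m$. Your second CRT pass on $b$ is a harmless elaboration; since $\gcd(\beta,n)=1$ you can solve $\beta b+\gamma\equiv m\pmod n$ directly, which is exactly what the paper does.
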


\begin{proof}
    We will use the same choice of $a_l, a_{l-2}, a_{l-4} \ldots $ as in Lemmas \ref{alfa 0} and \ref{beta}. From Lemma \ref{alfa 0}, we can obtain that $\alpha \equiv q_{k-1} \equiv 0 \pmod {p^a}$ for any odd prime $p$ and $a\in\Z^+$ by choosing $a_l, a_{l-2}, a_{l-4} \ldots $ modulo $p^a$, where $l = \frac{k}{2}-1$. For $n = r_1^{a_1}r_2^{a_2}\cdots r_m^{a_m}$, where $r_1,...,r_m$ are distinct primes, we can choose $a_l, a_{l-2}, a_{l-4} \ldots $ modulo $r_1^{a_1}, r_2^{a_2}, \ldots , r_m^{a_m}$, and using the Chinese Remainder Theorem, we then obtain $\alpha \equiv q_{k-1} \equiv 0 \pmod {n}$. At the same time, from Lemma \ref{beta}, we know that for every $p^a\mid n$, it holds that $p \nmid \beta$. Thus, $n$ and $\beta$ are coprime. Then,
    $D = \alpha b^2 + \beta b + \gamma \equiv \beta b + \gamma \pmod {n}$, and due to $n$ and $\beta$ being coprime, $D$ with a free variable $b$ attains every value modulo $n$.
\end{proof}
For odd $k$, we proved that $D$ cannot be divisible by any prime $p\equiv 3 \pmod 4$, so the assumption of an even $k$ is necessary.  

For even $n$, using the choice from Lemma \ref{alfa 0}, we cannot ensure that in the case a), the parity conditions from Friesen  \ref{friesen} will be satisfied and there might not exist any solution for $D$. There would be a problem with even $n$ in  Lemma \ref{beta} as well because $3-q_{k-2}^2\equiv2\equiv0\pmod2$. It does not mean that the theorem does not hold for even $n$ because there could exist a different choice of $a_1,...,a_k$ that satisfies all conditions.

\section{Universal quadratic forms}\label{4}
Let us recall the definition of a quadratic form and a universal quadratic form. Throughout this section, let $D \in \Z_{>1}$, which is not a perfect square. As a slight simplification, we will work with the order $\Z[\sqrt{D}]$. Of course, this is the full ring of integers when $D\equiv 2,3\pmod 4$.

\begin{definition}
    Let $\alpha = a + b\sqrt{D} \in \mathbb{Z}[\sqrt{D}]$. Then $\alpha' = a - b\sqrt{D}$ is called its conjugate. We say $\alpha$ is totally positive, denoted $\alpha \succ 0$, if $\alpha > 0$ and $\alpha' > 0$.
\end{definition}

\begin{definition}[Quadratic form]
    By a quadratic form $Q(x_1, x_2, \ldots, x_r)$ over $\mathbb{Z}[\sqrt{D}]$, we mean
        $$\sum_{1 \leq i \leq j \leq r} \alpha_{ij} x_i x_j,$$
    where $\alpha_{ij} \in \mathbb{Z}[\sqrt{D}]$ and $x_1, x_2, \ldots, x_r \in \mathbb{Z}[\sqrt{D}]$ are variables.
\end{definition}
\begin{definition}
    A totally positive quadratic form is a quadratic form 
    $Q(x_1, x_2, \ldots, x_r)$ over $\mathbb{Z}[\sqrt{D}]$ that satisfies 
    $Q(\gamma_1, \gamma_2, \ldots, \gamma_r) \succ 0$ for all 
    $\gamma_1, \gamma_2, \ldots, \gamma_r \in \mathbb{Z}[\sqrt{D}]$, except in the case 
    $Q(0,0,\ldots,0) = 0$.
\end{definition}

\begin{definition}[Universal quadratic form]
    A universal quadratic form over $\mathbb{Z}[\sqrt{D}]$ is a totally positive quadratic form 
    $Q(x_1, x_2, \ldots, x_r)$ over $\mathbb{Z}[\sqrt{D}]$ such that for every 
    $\alpha \succ 0$ there exist elements 
    $\gamma_1, \gamma_2, \ldots, \gamma_r \in \mathbb{Z}[\sqrt{D}]$ satisfying 
       $$ Q(\gamma_1, \gamma_2, \ldots, \gamma_r) = \alpha.$$
\end{definition}

We also state the result of Kala and Tinková \cite{kalatinkova2023} concerning the number of variables required for a universal quadratic form. Note that although \cite{kalatinkova2023, Kala_2023} states this result for the full ring of integers, it holds with exactly the same proof for the order $\Z[\sqrt{D}]$, even when it is not maximal.

\begin{theorem}[{Kala--Tinková, see \cite[Theorem 5.5]{Kala_2023}}]\label{promenny}
    Let $\sqrt{D} = [a_0, \overline{a_1, \ldots a_k}]$ and
    \[U = \begin{cases}
        \max (a_1, a_3, \ldots, a_{k-1}), & \text{if $k$ is even}\\
        \sqrt{D}, & \text{if $k$ is odd}
    \end{cases}\]
    Let $Q$ be universal quadratic form over  $\Z[\sqrt{D}]$ of rank $r$.
    \begin{enumerate}[label=\alph*)]
        \item If $Q$ is classical, then $r \geq \frac{U}{2}$.
        \item In general $r \geq \frac{\sqrt{U}}{2}$ (assuming $U \geq 240$). 
    \end{enumerate}
\end{theorem}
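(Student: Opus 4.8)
The plan is to reduce everything to the arithmetic of totally positive indecomposable integers of $\Z[\sqrt D]$. Recall that $\beta\succ 0$ is \emph{indecomposable} if $\beta\neq\beta_1+\beta_2$ for any $\beta_1,\beta_2\succ 0$, and that a universal form must represent every totally positive element, in particular every indecomposable. The first ingredient I would use is the classical description of the indecomposables of $\Z[\sqrt D]$ in terms of the continued fraction $\sqrt D=[a_0,\overline{a_1,\ldots,a_k}]$ (Perron; as used by Blomer--Kala and refined by Hejda--Kala): up to multiplication by totally positive units they form finitely many elements $\rho_0,\rho_1,\ldots$ indexed along one period, built from the convergents $p_i/q_i$ and satisfying a linear recurrence whose coefficients are the $a_i$. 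I would record explicit formulas for the traces $\operatorname{Tr}(\rho_i)$ and the norms $\rho_i\rho_i'$, and note that intermediate indecomposables of the shape $\rho_i+j\rho_{i+1}$ also occur, so that a long partial quotient $a_{2i+1}=U$ produces of order $U$ indecomposables ``living over'' that step.

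The second step is to extract $U$. When $k$ is even I would fix an index with $a_{2i+1}=U$ and isolate, among the indecomposables associated to that step, a configuration $\mathcal F$ of size $\asymp U$ (and with uniformly small norms) on which the rank lemma below can be run; when $k$ is odd one uses instead the description of indecomposables for an odd period --- where $\sqrt D+\lfloor\sqrt D\rfloor$ is almost a totally positive unit --- to produce the analogous configuration, now of size $\asymp\sqrt D=U$. Checking that the odd-period combinatorics really yields the claimed size is a self-contained point that I would treat separately.

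The heart of the argument is then a geometry-of-numbers lemma of Blomer--Kala type: a totally positive \emph{classical} quadratic form $Q$ of rank $r$ over $\Z[\sqrt D]$ that represents such a configuration $\mathcal F$ satisfies $r\ge|\mathcal F|/2$, whereas for a general (non-classical) $Q$ one only gets $r\ge\sqrt{|\mathcal F|}/2$, valid once $|\mathcal F|$ exceeds an absolute constant. I would prove it by passing to the Minkowski embedding $\Z[\sqrt D]^r\hookrightarrow\R^{2r}$, under which $Q$ becomes a genuine positive-definite quadratic form --- here total positivity is essential, since it guarantees that both archimedean places give positive-definite forms. Writing $\rho^{(j)}=Q(v^{(j)})$, the smallness of the norm $\rho^{(j)}(\rho^{(j)})'$ localizes each $v^{(j)}$, and the additive structure of the $\rho^{(j)}$ forces the $v^{(j)}$ to be distinct and to span of order $|\mathcal F|$ independent directions; a Hermite / second-minimum estimate for the rank-$2r$ lattice then gives $2r\gtrsim|\mathcal F|$. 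In the classical case the Gram matrix lies over $\Z[\sqrt D]$ and the estimate is sharp enough for the clean constant $r\ge U/2$; in the general case the Gram matrix lies only in $\tfrac12\Z[\sqrt D]$, the controlled dual lattice degrades by a factor comparable to a discriminant, and the same computation loses a square root, yielding $r\ge\sqrt U/2$. The threshold $U\ge 240$ is precisely what lets one invoke the relevant lattice inequality in the borderline dimensions (the analogue here of the numerical thresholds in the $15$ and $290$ theorems).

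I expect the main obstacle to be this last lemma --- concretely, ruling out that the representing vectors $v^{(j)}$ ``pile up'' and showing instead that they occupy $\asymp|\mathcal F|$ linearly independent directions. This needs the fine additive structure of consecutive indecomposables (Hejda--Kala--type information on which sums $\rho_i+\rho_{i+1}$ are again indecomposable, and on their norms), not merely their traces and norms in isolation, and it is also where the classical/non-classical dichotomy and the constant $240$ have to be pinned down. A secondary difficulty is the odd-period case, whose indecomposable combinatorics differs genuinely from the even case and must be handled on its own to obtain a configuration of size $\asymp\sqrt D$.
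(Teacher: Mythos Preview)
The paper does not prove this theorem at all: it is merely \emph{stated} as a known result, attributed to Kala--Tinkov\'a and cited from \cite[Theorem~5.5]{Kala_2023}, and then used as a black box in the proofs of Theorems~\ref{nej1} and~\ref{nej2}. There is therefore no ``paper's own proof'' to compare your proposal against.

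That said, your sketch is broadly in the spirit of how the result is actually proved in the cited literature. The description of indecomposables of $\Z[\sqrt D]$ via the continued fraction of $\sqrt D$, the fact that a long odd-indexed partial quotient $a_{2i+1}=U$ yields roughly $U$ indecomposables of small norm, and the passage through the Minkowski embedding to a positive-definite form over $\Z$ are all correct ingredients. Two points where your outline is looser than the actual argument: first, in the classical case the bound $r\ge U/2$ does not come from a Hermite-type inequality but from a much simpler observation --- the representing vectors of indecomposables of norm~$1$ are \emph{minimal} vectors of the associated integral lattice, and distinct minimal vectors are either equal up to sign or orthogonal, so one gets $r$ directions directly. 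Second, the constant $240$ and the square-root loss in the non-classical case are not an artefact of a dual-lattice degradation as you suggest; they come from the fact that minimal vectors of an arbitrary (non-classical) positive-definite $\Z$-lattice need not be orthogonal, but their number is still bounded by the kissing number, and one invokes the explicit bound (ultimately tied to $E_8$) that a lattice of rank $n$ has at most $2n^2$ minimal vectors once $n$ is not too small. So your identification of the ``main obstacle'' is somewhat off: the independence of the representing vectors is essentially free in the classical case, and in the general case it is handled by a single quoted lattice inequality rather than by fine additive information on the indecomposables.
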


We can now use our results from the previous sections, together with this theorem, to prove that there exists $D$ in a given congruence class, such that universal quadratic forms over $\Z[\sqrt{D}]$ must have a lot of variables.
\begin{theorem} \label{nej1}
    For every $m, n, s\in \Z$ there exists a non-square $D\geq2$ such that $D \equiv m \pmod n$, and every universal quadratic form over $\Z[\sqrt{D}]$ has at least $s$ variables. 
\end{theorem}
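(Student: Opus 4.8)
The plan is to combine Theorem \ref{modulo n} with part a) of Theorem \ref{promenny}, plus a separate argument handling the primes $p \equiv 3 \pmod 4$ that divide $n$. The basic idea is: to force a universal form to have many variables, it suffices (by Theorem \ref{promenny}a)) to produce $D$ whose continued fraction has an even period $k$ with some partial quotient $a_{2i-1}$ as large as we please; simultaneously we need $D \equiv m \pmod n$. Theorem \ref{modulo n} already gives us \emph{some} $D$ with even period $k$ and $D \equiv m \pmod n$, so the task is to revisit its proof and check that we still have enough freedom to also make one of the odd-indexed partial quotients huge.

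Concretely, I would argue as follows. Fix the target congruence $m \pmod n$. Write $n = 2^{e} n'$ with $n'$ odd. The first step is to reduce to the case $\gcd(m,n)$ having no prime factor $p \equiv 3 \pmod 4$: if $p \equiv 3 \pmod 4$ divides $\gcd(m,n)$, then we would need $p \mid D$, which is impossible for even period by... wait, Theorem \ref{4.5} is stated for odd period, so this obstruction does \emph{not} apply here, and in fact Theorem \ref{zbytky pro4} and Theorem \ref{modulo n} show all residues are attainable for even period. So actually there is no reduction needed for odd $n$; the subtlety is only the even part $2^e$. For that I would invoke Theorem \ref{zbytky pro4} directly if $n$ is a power of $2$, but since Theorem \ref{nej1} allows arbitrary $n$, I'd instead do the following: run the proof of Theorem \ref{modulo n} to handle the odd part $n'$, getting a family of admissible coefficient tuples $(a_1,\dots,a_k)$ with $\alpha \equiv 0$, $\gcd(\beta, n')=1$, so that $D = \alpha b^2 + \beta b + \gamma$ ranges over all residues mod $n'$ as $b$ varies; then handle the power of $2$ either by enlarging the period and using the period-$4$ construction of Theorem \ref{zbytky pro4} as a building block, or by checking that Friesen's parity condition still leaves room to also control $D$ modulo $2^e$.

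The step where I expect to introduce the new content is making one odd-indexed partial quotient large. In the constructions of Lemmas \ref{alfa 0} and \ref{beta}, the odd-indexed coefficients $a_1, a_3, a_5, \dots, a_l$ (for $l$ odd) are the ones being set $\equiv 0$ modulo the various prime powers; but ``$\equiv 0 \pmod{p^a}$'' is satisfied by infinitely many positive integers, in particular by arbitrarily large ones. So I would simply pick $a_1$ (say) to be a large multiple of the relevant modulus, specifically $a_1 \geq 2s$ (for the classical case, where Theorem \ref{promenny}a) gives $r \geq U/2 \geq a_1/2 \geq s$) — or $a_1 \geq 2s$ suffices and we don't even need the large-$U$ caveat of part b). Since $a_1$ is odd-indexed and $l = k/2 - 1$, I should double-check that $a_1$ really is among the coefficients we are free to enlarge in \emph{both} the $l$ odd and $l$ even branches; in the $l$ even branch of the lemmas, $a_1 \equiv x \not\equiv 0$ is already a free nonzero residue, so we can take $a_1$ large there too. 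Finally, since $U = \max(a_1,a_3,\dots,a_{k-1}) \geq a_1$, Theorem \ref{promenny}a) gives the required rank bound.

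The main obstacle, then, is bookkeeping rather than a deep idea: I must verify that simultaneously (i) Friesen's parity condition (one of $q_{k-2}$, $(q_{k-2}^2-(-1)^k)/q_{k-1}$ even) can be met — this is already handled in Lemma \ref{alfa 0} via a free choice modulo $2$ and CRT; (ii) the congruence $D \equiv m \pmod n$ is achieved — handled for odd $n$ by Theorem \ref{modulo n}, and I need to either restrict to odd $n$ and explain why that is no loss, or give the small extra argument for the $2$-part; and (iii) one odd-indexed $a_i$ is $\geq 2s$ — the new observation, essentially free. I would present it by saying: apply the proof of Theorem \ref{modulo n} but additionally require $a_1 \geq 2s$ (compatible with all the modular constraints imposed there), obtain $D \equiv m \pmod n$ with even period $k$ and $U \geq a_1 \geq 2s$, then conclude by Theorem \ref{promenny}a) that any universal $Q$ over $\Z[\sqrt D]$ has rank $r \geq U/2 \geq s$, noting that passing to the classical case is harmless since universality is only harder to achieve for non-classical forms — or simply invoking part a) since a universal form can be taken classical after the standard doubling trick, or just stating the bound for classical forms which still yields infinitely many $D$.
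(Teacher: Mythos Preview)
Your approach has a genuine gap in the final step. Theorem \ref{promenny}a) bounds only the rank of \emph{classical} universal forms, whereas Theorem \ref{nej1} asserts a lower bound for \emph{every} universal form. Your attempts to bridge this (``universality is only harder to achieve for non-classical forms'', ``doubling trick'') are backwards: if $Q$ is a non-classical universal form of rank $r$, then $2Q$ is classical of the same rank but is not universal, so part a) says nothing about $r$. The fix is simply to invoke part b) instead, choosing the relevant coefficient at least $\max(4s^2,240)$ so that $\sqrt{U}/2 \geq s$.

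Separately, your route through Theorem \ref{modulo n} is more circuitous than needed, and the handling of the $2$-part of $n$ is never actually carried out. The paper sidesteps all of this: it uses the explicit period-$4$ construction of Theorem \ref{zbytky pro4}, which already works for \emph{every} $n$ (the proof there sets $u \equiv v \equiv 0 \pmod{2n}$ and lets $y$ run over all residues). Since $u = a_1 = a_3$ is constrained only by a congruence modulo $2n$, one may take $u$ as large as desired; then part b) of Theorem \ref{promenny} gives $r \geq \sqrt{u}/2 \geq s$. This is both shorter and handles arbitrary $n$ without any odd/even case split.
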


\begin{proof}
    According to Theorem \ref{zbytky pro4}, $D$ can, for $\sqrt{D} = [t, \overline{u,v,u, 2t}]$, attain every value modulo any $n \in \N$, and this imposes only congruence conditions on $t, u$ and $v$ modulo $2n$ (or $n$), so the coefficients $a_1=a_3=u$ can be chosen arbitrarily large. Then it suffices to choose $u$ such that $\frac{\sqrt {u}}{2}\geq s$ and simultaneously $\max(a_1=u, a_3=u) =u\geq 240$. From Theorem \ref{promenny}, it then follows that the number of variables of any universal quadratic form over $Z[\sqrt{D}]$ is at least: \begin{align*}r \geq \frac{\sqrt {U}}{2} = \frac{\sqrt {\max(a_1, a_3, \ldots , a_{k-1})}}{2}&=\frac{\sqrt{u}}{2}\geq s.\qedhere
\end{align*}
\end{proof}
The following theorem additionally allows us to prescribe an arbitrary even period length.

\begin{theorem}\label{nej2}
    For every $m,s \in \Z$, odd $n$, and even $k>0$, there exists a non-square $D\geq2$ such that $D \equiv m \pmod n$, continued fraction $\sqrt{D}$ has a period length of $k$, 
    and every universal quadratic form over $\Z[\sqrt{D}]$ has at least $s$ variables. 
\end{theorem}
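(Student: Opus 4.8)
The plan is to rerun the argument of Theorem~\ref{nej1}, but with Theorem~\ref{modulo n} in place of Theorem~\ref{zbytky pro4}, so that the even period length $k$ is also under control. What this requires is a slightly sharper reading of the proof of Theorem~\ref{modulo n}: that proof imposes only \emph{congruence} conditions, never size conditions, on the half-period $a_1,\dots,a_{l+1}$ (where $l=\frac{k}{2}-1$) --- namely the conditions modulo the prime powers $p^{a}\parallel n$ supplied by Lemmas~\ref{alfa 0} and~\ref{beta}, together with one auxiliary choice modulo $2$ needed to meet the parity hypothesis of Theorem~\ref{friesen} --- while the remaining partial quotients $a_{l+2},\dots,a_{k-1}$ are fixed only through the palindrome relation $a_i=a_{k-i}$. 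In particular $a_1$ is subject to a single congruence modulo $2n$ (in case~a) of those lemmas one admissible value is $a_1\equiv 0\pmod n$; in case~b) any residue that is a unit modulo each $p^{a}\parallel n$ works), and every residue class modulo $2n$ contains arbitrarily large positive integers --- which is the only additional freedom I need.

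Concretely, I would fix $m$, the odd $n$, and the even $k\geq 4$ (the case $k=2$ being handled separately, see below), carry out the proof of Theorem~\ref{modulo n}, and when choosing $a_1$ within its congruence class take $a_1\geq\max(240,\,4s^{2})$, letting the finitely many remaining free partial quotients satisfy their congruence conditions (their sizes being irrelevant). By Theorem~\ref{friesen} there are then infinitely many squarefree $D$ with $\sqrt D=[a_0,\overline{a_1,\dots,a_{k-1},2a_0}]$ --- hence with period exactly $k$, and, being squarefree and $\geq 2$, non-square --- and, exactly as in the proof of Theorem~\ref{modulo n} (where $\gcd(\beta,n)=1$), one of these $D$ satisfies $D\equiv m\pmod n$. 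Since $k$ is even, Theorem~\ref{promenny} applies with $U=\max(a_1,a_3,\dots,a_{k-1})\geq a_1\geq 240$, so part~b) gives, for every universal quadratic form over $\Z[\sqrt D]$ of rank $r$,
$$r\ \geq\ \frac{\sqrt{U}}{2}\ \geq\ \frac{\sqrt{a_1}}{2}\ \geq\ s.$$
For $k=2$ one applies Theorem~\ref{friesen} directly to the one-term symmetric sequence $(a_1)=(u)$: here $q_{k-2}=1$ and $\frac{q_{k-2}^{2}-(-1)^{k}}{q_{k-1}}=0$, which is even, so the parity hypothesis holds for every $u$; one finds $\beta\in\{1,2\}$ and $\gamma=0$, so $D=\alpha b^{2}+\beta b$ attains every residue modulo the odd $n$ as $b$ varies once $u$ is taken to be a multiple of $2n$, while $u=a_1$ may still be chosen $\geq\max(240,4s^{2})$, and the same estimate for $r$ follows.

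I expect the only genuine difficulty to be bookkeeping: one must check that enlarging $a_1$ inside its congruence class disturbs none of the ingredients. It does not --- Lemmas~\ref{alfa 0} and~\ref{beta} constrain $a_1$ only modulo the $p^{a}\parallel n$; the parity condition of Theorem~\ref{friesen} is arranged by an independent choice modulo $2$ and merged in by the Chinese Remainder Theorem; and the closing ``$D$ linear in $b$'' step uses only $\gcd(\beta,n)=1$ --- and none of these depends on the size of $a_1$. A secondary point is to confirm that the continued fraction so obtained has period exactly $k$ and not a proper divisor of it; this follows from Theorem~\ref{friesen} once $D$ is taken large enough that $a_0=\lfloor\sqrt D\rfloor$ exceeds each of $a_1,\dots,a_{k-1}$ (possible, as Friesen provides infinitely many $D$), for then $a_k=2a_0$ is the unique largest partial quotient of the period.
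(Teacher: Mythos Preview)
Your proof is correct and follows essentially the same route as the paper's: observe that Theorem~\ref{modulo n} constrains the partial quotients only by congruences, so an odd-indexed $a_i$ (you pick $a_1$) may be taken as large as desired, and then apply Theorem~\ref{promenny}. You are in fact more careful than the paper on two points it elides --- the separate treatment of $k=2$ (where the case split of Lemmas~\ref{alfa 0}--\ref{beta} with $l=0$ does not literally apply) and the verification that the resulting period is exactly $k$ --- while your detour through squarefreeness is unnecessary (any $D$ in Friesen's family already has periodic $\sqrt D$, hence is non-square) and should simply be dropped, since combining ``squarefree'' with ``$D\equiv m\pmod n$'' would require more than Theorem~\ref{friesen} states.
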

\begin{proof}
  From Theorem \ref{modulo n}, we know that we can choose $a_0, \ldots, a_k $ modulo $n$ such that $D \equiv m \pmod n$ for any $n$. Because these are just modular conditions, we can choose  $a_0, \ldots , a_k$ arbitrarily large so that $\frac{\sqrt {\max(a_1, a_3, \ldots , a_{k-1})}}{2}\geq s$ and at the same time \mbox{$\max(a_0, \ldots , a_k) \geq 240.$}. Then by Theorem \ref{promenny}, the number of variables of any universal quadratic form over $\Z[\sqrt{D}]$ is at least: \begin{align*}
r \geq \frac{\sqrt {U}}{2} &= \frac{\sqrt {\max(a_1, a_3, \ldots , a_{k-1})}}{2}\geq s. \qedhere
\end{align*}
\end{proof}

\bibliographystyle{plain}
\bibliography{literatura}

\end{document}